\theoremstyle{plain}
\newtheorem{assumption}{Assumption}
\newtheorem{lemma}{Lemma}
\newtheorem{definition}{Definition}
\def\A{\mathbb{A}}
\def\Ind{\text{Ind}}
\def\Hom{\text{Hom}}
\begin{document}

\title{Dimensions of Automorphic Representations, $L$-Functions and Liftings}
\author{Solomon Friedberg and David Ginzburg}

\address{Friedberg:  Department of Mathematics, Boston College, Chestnut Hill, MA 02467-3806, USA}
\email{solomon.friedberg@bc.edu}
\address{Giinzburg: School of Mathematical Sciences, Tel Aviv University, Ramat Aviv, Tel Aviv 6997801, Israel}
\email{ginzburg@post.tau.ac.il}
\thanks{This research was supported by the US-Israel Binational Science foundation, grant number 2016000, and by the NSF, grant number DMS-1801497 (Friedberg).}
\subjclass[2010]{Primary 11F66; Secondary 11F27, 11F70, 17B08, 22E50, 22E55}
\keywords{Rankin-Selberg integral, Langlands $L$-function, dimension equation, unipotent orbit, Gelfand-Kirillov dimension, doubling integral,
theta correspondence}
\begin{abstract}
There are many Rankin-Selberg integrals representing Langlands $L$-functions, and it is not apparent what the limits of the Rankin-Selberg
method are.  The Dimension Equation is an equality satisfied by many such integrals that suggests a priority for further investigations.
However there are also Rankin-Selberg integrals that do not satisfy this equation.  Here we propose an extension and reformulation
of the dimension equation that includes many additional cases.  We explain some of these cases, including the new doubling integrals
of the authors, Cai and Kaplan.  We then show how this same
equation can be used to understand theta liftings, and how doubling integrals fit into a lifting framework.   We give an example of a new type of
lift that is natural from this point of view.
\end{abstract}
\maketitle

\section{Introduction}
Two broad classes of integrals appear frequently in the theory of automorphic forms.  
Let $G$ be a reductive group defined over a number field $F$, $\rho$ be a complex analytic representation of the $L$-group of $G$, and $\pi$ be
an irreducible automorphic representation of $G(\A)$.  First,
one may sometimes represent the Langlands $L$-function $L(s,\pi,\rho)$ (for $\Re(s)\gg0$) as an integral, and the desired analytic properties 
of this $L$-function may then be deduced from the integral representation.  Such constructions, often called Rankin-Selberg integrals, have a long history with many examples (Google Scholar
lists 3,370 results for the phrase ``Rankin-Selberg integral"), with Eisenstein
series and their Fourier coefficients appearing in many of the integrals.  See Bump \cite{Bump-2005} for an engaging survey.
Second, given two reductive groups $G,H$ there is sometimes 
a function $\theta(g,h)$ on $G(\A)\times H(\A)$ that is $G(F)\times H(F)$ invariant and that may be used as an integral kernel
to transport automorphic representations on $G(\A)$ to $H(\A)$.  One considers the functions
$$f_\varphi(h):=\int_{G(F)\backslash G(\A)} \varphi(g)\,\theta(g,h)\,dg$$
as $\varphi$ runs over the functions in $\pi$ and lets $\sigma$ be the representation of $H(F)\backslash H(\A)$ generated by the functions $f_\varphi$.  The most familiar example is the classical theta correspondence, where the integral kernel $\theta$ is constructed from the Weil representation (see Gan \cite{Gan-ICM} for recent progress), while other classes of such integrals
are given by the authors and Bump \cite{BFG-2006} and by Leslie \cite{Leslie}.

We pose two natural questions.  First, the construction of Rankin-Selberg integrals representing $L$-functions is often quite involved, and it may take a great deal of work
to see that a specific integral is Eulerian. Is there any commonality among the known Rankin-Selberg integrals that can be used
to decide whether a specific integral is a worthy candidate for investigation, or to say it differently, to rule out
integrals as being unlikely to represent an $L$-function?  Second, is there any way to know whether an automorphic function of two variables $\theta(g,h)$ is likely to be a
useful kernel function, and if so, can one predict the properties of the representation $\sigma$ from those of $\pi$ and $\theta$?  For example, given $\pi$ and $\theta$,
when is it reasonable to think that $\sigma$ might be generic?

An answer to the first question was proposed by the second named author in \cite{Ginzburg-2014}: the {\sl dimension equation} describes an equality
between the dimensions of groups and the dimensions of the representations that is satisfied by many integrals that represent $L$-functions. 
Below we develop the dimension question in some detail and illustrate it in many cases, including cases of integrals that represent the product of two or more distinct
Langlands $L$-functions in separate complex variables.  In fact a refinement of the dimension equation expands its applicability to additional cases.
We shall explain this refinement, and show how this
allows us to include doubling integrals including the new doubling integrals of the authors, Cai and Kaplan \cite{CFGK}.   Then we shall use the dimension equation to discuss integral kernels, 
showing how the equation gives an indication of what to expect in integral kernel constructions, and 
explain how doubling integrals may be used to bridge these two classes of constructions.  Last, we shall pursue the dimension equation farther in specific cases, providing new examples
for further study.

To conclude our introduction, we describe the contents by section.
In Section~\ref{ginzequation} we introduce unipotent orbits and state the dimension equation, following~ \cite{Ginzburg-2014}.  
Though this paper may be read independently from \cite{Ginzburg-2014}, it is a natural continuation of that work.  Then, in Section~\ref{dim-eq-examples} we give many examples of Rankin-Selberg integrals
that satisfy the dimension equation.  We conclude the section by presenting an exotic example---an integral of Rankin-Selberg type that is Eulerian by two different choices of Eisenstein series, one satisfying the dimension equation
but the other not.  This motivates the need to extend the dimension equation.  This extension is described in Section~\ref{extending-dc}, and examples
of the extended dimension equation are presented in Section~\ref{examples-of-extend}.  Next,  in Section~\ref{integral-kernels} integral kernels are connected to the dimension equation,
and the use of the dimension equation to predict aspects of the resulting correspondence is illustrated. Section~\ref{general} revisits doubling integrals from the perspective of integral
kernels, and formulates a general classification question.  Then in Section~\ref{example} a new low rank example is presented and its analysis is described in brief.
The global theory has a local counterpart, and the final Section illustrates the local properties that appear in this context.

Part of the preparation of this paper occurred when the first-named author was a visitor at the Simons Center for Geometry and Physics, and he expresses his appreciation for this opportunity.
While the paper was in the final stages of preparation, Aaron Pollack communicated to us that Shrenik Shah has independently suggested an extension of the dimension equation.

\section{The Dimension Equation}\label{ginzequation}

Fix a number field $F$ and let $\A$ be its ring of adeles.  If $G$ is an algebraic group defined over $F$, then we write $[G]$ for the quotient $G(F)\backslash G(\A)$.

We begin by recalling some facts about unipotent orbits.  References for this material are Collingwood and McGovern \cite{C-M} and Ginzburg \cite{G1}.
Let $G$ be a reductive group defined over $F$, $\overline{F}$ be an algebraic closure of $F$, and let $\mathcal{O}$ be a unipotent orbit of $G(\overline{F})$.  If $G\subset GL(V)$ is a classical
group then these orbits are indexed by certain partitions of $\dim(V)$.  For example, if $G=GL_n$ then they are indexed by all partitions of $n$, while if 
$G=Sp_{2n}$ then they are indexed by the partitions of $2n$ such that each odd part occurs an even 
number of times.  When $\mathcal{O}$ is indexed by a partition $\mathcal{P}$, we simply write $\mathcal{O}=\mathcal{P}$.

For convenience suppose that $G$ is a split classical group.
 Fix a Borel subgroup $B=TN$ with unipotent radical $N$, let $\Phi$ denote the positive roots with respect to $B$, and for $\alpha\in\Phi$ let $x_\alpha(t)$ denote the corresponding one-parameter 
subgroup of $N$.
Then one attaches a unipotent subgroup $U_\mathcal{O}\subseteq N$ to $\mathcal{O}$.  
This group may be described as follows.  If $\mathcal{O}$ is given in partition form by
$\mathcal{O}=(p_1^{e_1}\dots p_k^{e_k})$ (we show repeated terms in a partition using exponential notation) let $h_\mathcal{O}(t)$ be the diagonal matrix whose entries are  $\{t^{p_i-2j-1}\mid 0\leq j\leq p_i-1\}$ repeated $e_i$ times, with the entries 
arranged in non-increasing order in terms of power of $t$.  This gives rise to a filtration $N\supset N_1\supset N_2 \supset \cdots$ of $N$, where $N_i=N_{i,\mathcal{O}}$ is given by
\begin{equation}\label{filtration}
N_{i,\mathcal{O}}(F)=\{x_\alpha(r)\in N(F)\mid  \alpha\in\Phi~\text{and}~h_\mathcal{O}(t)x_\alpha(r)h_\mathcal{O}(t)^{-1}=x_\alpha(t^jr)~\text{for some $j$ with~} j\geq i\}.
\end{equation}
 Also, let
$G_{\mathcal{O}}$ be the stabilizer of $h_\mathcal{O}(t)$ in $G$.

Fix a nontrivial additive character $\psi$ of $F\backslash \A$, and let $L_{2,\mathcal{O}}=N_{2,\mathcal{O}}/[N_{2,\mathcal{O}},N_{2,\mathcal{O}}]$.  
Then the characters of the abelian group $L_{2,\mathcal{O}}(\A)$ may be identified with $L_{2,\mathcal{O}}(F)$.   Also $G_{\mathcal{O}}(F)$ acts on the characters of $L_{2,\mathcal{O}}(\A)$ and hence on $L_{2,\mathcal{O}}(F)$  by conjugation.
Over the algebraically closed field $\overline{F}$, the action of $G_{\mathcal{O}}(\overline{F})$ on $L_{2,\mathcal{O}}(\overline{F})$ has an open orbit, and the stabilizer in $G_\mathcal{O}(\overline{F})$
of a representative for this orbit is a reductive group whose Cartan type is uniquely determined by the orbit.  A character $\psi_{\mathcal{O}}$ of $L_{2,\mathcal{O}}(F)$ will be called a generic 
character associated to $\mathcal{O}$ if
the connected component of its stabilizer in $G_{\mathcal{O}}(F)$ has, after base change to $\overline{F}$, the same Cartan type.  We caution the reader that there may be infinitely many 
$G_{\mathcal{O}}(F)$-orbits of generic characters associated to a single $\mathcal{O}$.  (For an example, see \cite{Ginzburg-2014}, p.\ 162.)

Suppose that $G$ is a reductive group, $\mathcal{O}$ is a unipotent orbit of $G$, and $\psi_{\mathcal{O}}$ is a generic character. Let $U_{\mathcal{O}}=N_{2,\mathcal{O}}$ and regard 
$\psi_{\mathcal{O}}$ as a character of $U_{\mathcal{O}}(\A)$ in the canonical way.
 If $\varphi$ is an automorphic form on $G(\A)$, its {\sl Fourier coefficient 
with respect to $\mathcal{O}$} is defined to be
$$\varphi^{U_\mathcal{O},\psi_{\mathcal{O}}}(g)=\int_{[U_{\mathcal{O}}]}\varphi(ug)\psi_{\mathcal{O}}(u)\,du,\qquad g\in G(\A). $$
If $\pi$ is an automorphic representation of $G(\A)$, we say that $\pi$ has {\sl nonzero Fourier coefficients with respect to $\mathcal{O}$} if the set of functions $\varphi^{U_\mathcal{O},\psi_{\mathcal{O}}}(g)$
is not identically zero as $\varphi$ runs over the space of $\pi$ and $\psi_{\mathcal{O}}$ runs over set of generic characters associated to $\mathcal{O}$.

We recall that the set of unipotent orbits has a natural partial ordering, which corresponds to inclusion of Zariski closures and corresponds to the dominance order for the associated partitions.
There is a unique maximal unipotent orbit $\mathcal{O}_{\text{max}}$ and for this orbit the group $U_{\mathcal{O}_\text{max}}$ is $N$,the unipotent
radical of the Borel subgroup.
If $\pi$ is an automorphic representation of $G(\A)$ we let $\mathcal{O}(\pi)$ denote the set of maximal unipotent orbits for which $\pi$ has nonzero Fourier coefficients.  For example,
the automorphic representation $\pi$ is generic if and only if $\mathcal{O}(\pi)=\{\mathcal{O}_{\text{max}}\}$.

We remark that these same definitions apply without change in the covering group case.  Indeed, if $\widetilde{G}$ is a cover of $G(\A)$, then $N(\A)$ embeds canonically in $\widetilde{G}(\A)$
and so the same definitions apply.  For example, in \cite{BFG-theta} the authors construct a representation $\Theta_{2n+1}$ of the double cover of $SO_{2n+1}(\A)$ for $n\geq4$ such
that $\mathcal{O}(\Theta_{2n+1})=(2^{2m}1^{2j+1})$, where $n=2m+j$ with $j=0$ or $1$. Here and below we omit the set notation when the set 
$\mathcal{O}(\pi)$ is a singleton and identify $\mathcal{O}(\pi)$ with the partition.

Recall that each nilpotent orbit in a Lie algebra has a dimension.  In fact, these dimensions are computed for classical groups in terms of partitions in \cite{C-M}, Corollary 6.1.4.
We use this to discuss and work with the dimensions of the unipotent orbits under consideration here.
To illustrate, on the symplectic group $Sp_{2n}$, let ${\mathcal O}$ be the unipotent orbit associated to the partition $(n_1n_2\ldots n_r)$ of $2n$. 
For such a partition to be a symplectic partition it is required that all odd numbers in the partition occur with even multiplicity.  Then,  
\begin{equation}\label{dim2}
\dim{\mathcal O}=2n^2+n-\frac{1}{2}\sum_{i=1}^r(2i-1)n_i-\frac{1}{2}a,
\end{equation}
where $a$ is the number of odd integers in the partition $(n_1n_2\ldots n_r)$. 
Importantly, there is also a connection between the dimension of an orbit $\mathcal{O}$ and the Fourier coefficients above.  That is:
\begin{equation}\label{compute-FJ}\tfrac12\dim{\mathcal O}=\dim N_{2,\mathcal{O}} + \tfrac12 \dim N_{1,\mathcal{O}}/N_{2,\mathcal{O}}.
\end{equation}
As this suggests, when $N_{1,\mathcal{O}}\neq N_{2,\mathcal{O}}$, there is another natural coefficient of Fourier-Jacobi type (that is, involving a theta series);
see \cite{Ginzburg-2014},  equation (7), for details, and the discussion of \eqref{func1} at the end of Section~\ref{extending-dc} below.

Throughout the rest of this paper we make the following assumption:
\begin{assumption}\label{unique-orbit} For all automorphic representations $\pi$ under consideration, the dimension of each orbit in the set $\mathcal{O}(\pi)$ is the same.
\end{assumption}
We know of no examples where Assumption~\ref{unique-orbit} does not hold, and it is expected that it is always true (\cite{G1}, Conjecture 5.10).  In fact, we know of no examples where $\mathcal{O}(\pi)$ is not a singleton.
Under Assumption~\ref{unique-orbit}, we write $\dim\mathcal{O}(\pi)$ for the dimension of any orbit in the set $\mathcal{O}(\pi)$. 
Given a representation for which Assumption~\ref{unique-orbit} holds, define (following \cite{G1}, Definition~5.15) the Gelfand-Kirillov dimension of $\pi$: 
$$\dim(\pi)=\tfrac12 \dim\mathcal{O(\pi)},$$ 
(compare \cite{B-V}, (3.2.6) and \cite{K} (2.4.2), Remark (iii)).  By definition, if $\chi$ is an idele class character then $\dim(\chi)=0$.  Similarly if $\psi$ is an additive
character of $[U]$ where $U$ is a unipotent group then $\dim(\psi)=0$.

It is expected that it is possible to compute the unipotent orbit attached to an Eisenstein series from the unipotent orbits of its inducing data.  See \cite{Ginzburg-2014}, Section~4.3.
This has been confirmed in the degenerate case \cite{Cai}.  More generally, suppose that $P$ is a parabolic subgroup of $G$ with Levi decomposition $P=MN$, and $\tau$ is an automorphic
representation of $M(\A)$ such that Assumption 1 holds for $\tau$, and consider the Eisenstein series $E_\tau(g,s)$ on $G(\A)$ corresponding to the induced representation 
$\Ind_{P(\A)}^{G(\A)} 
\tau \delta_P^s.$ Then one expects that
\begin{equation}\label{dim-of-ES}\dim E_\tau(g,s)=\dim \tau + \dim U.
\end{equation}
This formula can be checked in many cases (to give one example, if $\tau$ is generic then $E_\tau$ is too, and so \eqref{dim-of-ES} holds)
and we shall make use of it below. 

We now have the information needed to explain the Dimension Equation of \cite{Ginzburg-2014}, Definition 3.
Suppose one has a Rankin-Selberg integral over the various groups $G_j$
and involving various automorphic representations $\pi_i$, and the integral unfolds to unique functionals which are factorizable.
Here both the adelic modulo rational quotients in the integral and the automorphic representations are with respect to the same field $F$ and ring of adeles $\A$.
Then the expectation formulated in \cite{Ginzburg-2014} is 
that the sum of the dimensions of the representations is (generally) equal to the sum of the dimensions of the groups.  
\begin{definition} The Dimension Equation is the equality
$$\sum_i \dim \pi_i =\sum_j \dim G_j.$$
\end{definition}

A number of examples of this equation in the context of Rankin-Selberg integrals are presented in \cite{Ginzburg-2014}.   To clarify the extent of applicability of this equation, we will give additional examples in the next Section.

\section{Examples of the Dimension Equation}\label{dim-eq-examples}

We begin with classical examples of the dimension equation.  Riemann's second proof of the analytic continuation of the zeta function represents it as a 
Mellin transform of the Jacobi theta function, and this integral satisfies the dimension equation (both sides are 1).  Similarly, the Hecke integral
\begin{equation}\label{Hecke}\int_{[\mathbb{G}_m]} \varphi\begin{pmatrix}a&\cr&1\end{pmatrix} |a|^s\,d^\times a\end{equation}
satisfies the equation (both sides are again 1), and the classical Rankin-Selberg integral 
\begin{equation}\label{classicalRS}\int_{[PGL_2]} \varphi_1(g)\varphi_2(g) E(g,s)\,dg\end{equation}
has group of dimension 3 and three representations that are each of dimension 1.  

More generally, the Rankin-Selberg integrals of Jacquet, Piatetski-Shapiro and Shalika \cite{J-PS-Sh} on $GL_n\times GL_k$ satisfy the dimension equation.  Suppose that
$\pi_1$, $\pi_2$ are irreducible cuspidal automorphic representations of $GL_n(\A)$, $GL_k(\A)$ resp.
Since all cuspidal automorphic representations of general linear groups are generic, we have $\dim(\pi_1)=n(n-1)/2$ and $\dim(\pi_2)=k(k-1)/2$.
Let $\phi_i\in \pi_i$ for $i=1,2$.

If $k=n$ then the Ranklin-Selberg integral representing $L(s,\pi_1\times \pi_2)$, generalizing \eqref{classicalRS},  is given by
\begin{equation}\label{equalRS}\int_{[PGL_n]} \varphi_1(g)\varphi_2(g) E(g,s)\,dg\end{equation}
where $E$ is the (``mirabolic") Eisenstein series induced from the standard parabolic subgroup  $P$ of type $(n-1,1)$ and character $\delta_P^s\eta^{-1}$, where 
$\delta_P$ is the modular character of $P$ and 
$\eta$ is the product of the central
characters of $\pi_1$ and $\pi_2$.  
In this integral the dimension of the group is $n^2-1$.  As for the representations, 
the orbit of the Eisenstein series $E$ is $(21^{n-2})$, so it is of dimension $n-1$ (compare \eqref{dim-of-ES}).  The dimension equation is the equality
$$n^2-1=2(n(n-1)/2)+n-1.$$

If $k\neq n$, suppose $k<n$ without loss.  When $k=n-1$ the integral is a direct generalization of \eqref{Hecke}:
$$\int_{[GL_{n-1}]}\varphi_1\begin{pmatrix}g&\cr&1\end{pmatrix}\varphi_2(g)|\det g|^s\,dg.$$
Here the group is of dimension $(n-1)^2$ and the representations $\pi_1,\pi_2$ are of dimension $n(n-1)/2$ and $(n-1)(n-2)/2$ resp. Since
$$(n-1)^2=n(n-1)/2+(n-1)(n-2)/2$$ 
the dimension equation holds.  However, if $k<n-1$ then a similar integral 
$$\int_{[GL_k]}\varphi_1\begin{pmatrix}g&\cr&I_{n-k}\end{pmatrix}\varphi_2(g)|\det g|^s\,dg$$
does not satisfy the dimension equation: the group is of dimension $k^2$ while the representations are of total dimension 
$n(n-1)/2+k(k-1)/2>k^2$.  One way to satisfy the equation is to introduce an additional integration over a group of dimension $(n^2-n-k^2-k)/2$.
And indeed the integral of Jacquet, Piatetski-Shapiro and Shalika
$$\int_{[GL_k]}\int_{[Y_{n,k}]}\varphi_1\left(y\begin{pmatrix}g&\cr&I_{n-k}\end{pmatrix}\right)\varphi_2(g)\psi(y)|\det g|^s\,dg$$
 includes an integration over the group
$Y_{n,k}$ consisting of upper triangular $n\times n$ unipotent matrices whose upper left $(k+1)\times (k+1)$ corner is the
identity matrix, a group which has dimension $n(n-1)/2-k(k+1)/2$.  To be sure, the integral of \cite{J-PS-Sh}
requires an additive character $\psi$ of this group (the restriction of the standard generic character to $Y_{n,k}$), and this finer level of detail is not seen by the dimension equation,
but the dimension equation already makes the introduction of the group $Y_{n,k}$ natural.

Another example of the dimension equation is practically tautological.  If $E(g,s)$ is an Eisenstein series on a reductive group $G$ formed from generic inducing data, then a straightforward
calculation shows that it too is generic.  If $E(g,s)$ is generic with respect to the
unipotent subgroup $N$ and character $\psi_N$, then its Whittaker coefficient
$$\int_{[N]} E(n,s) \psi_N(n)\, dn$$
satisfies the dimension equation, since both the group and the representation have dimension $\dim(N)$. The Langlands-Shahidi method studies such coefficients
systematically and uses them to get information about $L$-functions.  See Shahidi \cite{Shahidi} and the references there.

Our next example of the dimension equation is given by the integral representing the Asai $L$-function for $GL_n$ (Flicker \cite{Flicker}).  
Suppose $K/F$ is a quadratic extension, and for this example let $\A$ denote the adeles of $F$ and $\A_K$ the adeles of $K$.
Suppose that $\Pi$ is an irreducible cuspidal automorphic representation on $GL_n(\A_K)$, and $\varphi\in\Pi$. Then this integral is of the form
$$\int_{Z(\A) GL_n(F)\backslash GL_n(\A)} \varphi(g) E(g,s)\,dg,$$
where $E(g,s)$ is again a mirabolic Eisenstein series on $GL_n(\A)$ (the central character of $\Pi$ is built into $E$ so the product is invariant under the center $Z(\A)$ of $GL_n(\A))$.
Here the group has dimension $n^2-1$.  This time the automorphic form is on the $\A$-points of the restriction of scalars $\text{Res}_{K/F}GL_n$. Thus the 
dimension of $\pi$ viewed as an automorphic representation over $F$ is twice its dimension over $K$.  That is, in this context $\dim(\Pi)=2\times \frac{n(n-1)}{2}$. The dimension equation is satisfied
since
$$n^2-1=2\times \frac{n(n-1)}{2} + n-1.$$

There are also integrals that represent the product of two different Langlands $L$-functions
{\sl in two separate complex variables}.  These also satisfy the dimension equation. We give a number of examples.
The first instance of such an integral, representing the product of the
standard and exterior square $L$-functions, was provided by Bump and Friedberg \cite{BF}. Suppose
that $\pi$ is a cuspidal automorphic representation of $GL_n$.  If $n=2k$ is even, then the integral is of the form
$$\int_{[GL_1\backslash (GL_k\times GL_k)]}\varphi(\iota(g_1,g_2)) E(g_1,s_1) \left|\frac{\det g_2}{\det g_1}\right|^{s_2-1/2} \,dg_1\,dg_2,$$
where $\iota:GL_k\times GL_k\to GL_{2k}$ is a certain embedding, $\varphi$ is in $\pi$, and $E$ is the mirabolic Eisenstein series
on $GL_k$.  In the quotient $GL_1\backslash (GL_k\times GL_k)$, the group $GL_1$ acts diagonally. This integral satisfies the dimension equation since the group is of dimension
$2k^2-1$ and the representations are of dimensions $(2k)(2k-1)/2$ and $k-1$.  If $n=2k+1$ is odd, then the integral is similar, but is now over
$[GL_1\backslash (GL_{k+1}\times GL_k)]$ with the Eisenstein series on $GL_{k+1}$.  The dimension equation is satisfied in the case that $n$ is odd since
$$(k+1)^2+k^2-1=(2k+1)(2k)/2 + ((k+1)-1).$$

Three additional examples of Rankin-Selberg integrals representing the product of Langlands $L$-functions attached to the standard and spin representations
in separate complex variables
are found in Bump, Friedberg and Ginzburg \cite{BFG-2}. Suppose $\pi$ on $GSp_4$ is generic with trivial central character and $\varphi$ is in the space
of $\pi$.  Let  $P$, $Q$
be the two non-conjugate standard maximal parabolic subgroups of $GSp_4$ and let $E_P$, $E_Q$ be the Eisenstein series induced from
from the modular functions $\delta_P^{s_1}$, $\delta_Q^{s_2}$ resp. Then the integral is of the form
$$\int_{[GL_1\backslash GSp_4]} \varphi(g) E_P(g,s_1) E_Q(g,s_2)\,dg.$$
Here the group is of dimension $10$ and the representations are of dimensions $4,3,3$ resp., so the dimension equation is satisfied.

Suppose next that $\pi$ is on $GSp_6$, generic, and has trivial central character, and let $H$ be the subgroup
of $GL_2\times GSp_4$ of pairs of group elements $(h_1,h_2)$ with equal similitude factors.  Then, for a certain embedding $\iota:H\to GSp_6$, the integral given there is of the form
$$\int_{[GL_1\backslash H]}\varphi(\iota(h_1,h_2)) E_1(h_1,s_1) E_2(h_2,s_2)\,dh_1\,dh_2$$
where $E_1$ (resp.\ $E_2$) is a Borel (resp.\ Siegel) Eisenstein series on $GL_2$ (resp.\ $GSp_4$).  
Here the group is of dimension $13$ and the three representations in the integral
are of dimensions 9, 1, 3 (resp.), as the Siegel Eisenstein series has unipotent orbit $(2^2)$.  

Third suppose that $\pi$ is on $GSp_8$ and is once again generic.  Introduce the subgroup $H$ of $GL_2\times GSp_6$ of pairs with equal similitude factors
and let $\iota:H\to GSp_8$ be a certain embedding given in \cite{BFG}.
Then the authors and Bump prove that the integral
$$\int_{[GL_1\backslash H]} \varphi(\iota(h_1,h_2)) E(h_2,s_1,s_2)\,dh_1\,dh_2$$
is once again a product of two different Langlands $L$-functions, where $E$ is the two-variable Eisenstein series on $GSp_6$ induced from  the 
standard parabolic with Levi factor $GL_1\times GL_2$.  The group has dimension 24, $\pi$ has dimension 16, and indeed $E$ has dimension 8.

To present one further example,
Pollack and Shah \cite{Po-Shah} give an integral representing the product of three Langlands L-functions in three distinct complex variables.  If $\pi$ is on $PGL_4$, then this is of the form
$$\int_{[PGL_4]}\varphi(g) E_1(g,w)E_2(g,s_1,s_2)\,dg$$
where $E_1$ and $E_2$ are the Eisenstein series with Levi factors $GL_2\times GL_2$ and $GL_1^2\times GL_2$, resp.  Here the dimension of the group in the integral
is 15 and the representations are of dimensions 6,4,5 resp.  Those authors also present a related integral on $GU(2,2)$ and the same dimension count holds.

The dimension equation holds as well when covering groups are involved.  For example, the symmetric square integrals of Bump-Ginzburg \cite{Bump-G} and Takeda \cite{Takeda} may be checked
to satisfy this.

Nonetheless, not all Rankin-Selberg integrals satisfy the dimension equation in the form presented in \cite{Ginzburg-2014,Ginzburg-2016}.  
Here is a simple, striking example.
Suppose $\pi$ is a cuspidal automorphic representation of $PGL_4(\A)$, $\varphi$ is in the space of $\pi$, and $E(g,s)$ is an Eisenstein series on
the group $PGSp_4(\A)$.  Consider the integral
\begin{equation}\label{doubleex}\int_{[PGSp_4]} \varphi(g) E(g,s)\,dg.\end{equation}
The basic dimension equation states that
$$ \dim(\pi)+\dim(E)=\dim(PGSp_4)=10.$$
Since $\pi$ is generic, it is of dimension 6, so this equation requires an Eisenstein series of dimension 4, that is, a generic Eisenstein series.  
In fact there is an Eulerian integral with this data.  Using the $GSp_4$ Eisenstein series with trivial central character
induced from an automorphic representation $\tau$ on $GL_2$
via the parabolic with Levi $GL_1\times GL_2$ (the Klingen parabolic), the integral unfolds to the degree 12
$L$-function $L(s,\pi\times \tau,\wedge^2\times \text{standard})$.  Indeed, after using low rank isogenies one sees that this integral is essentially the same as the 
integral for the $SO_n\times GL_k$ standard $L$-function obtained by Ginzburg \cite{Gi-1990} in the case $n=6$, $k=2$.  For these parameters, the $L$-function
is realized as an integral of an $SO_6$ automorphic form against an Eisenstein series on $SO_5$.

However, there is {\sl another}  Rankin-Selberg integral of the form \eqref{doubleex} that is also Eulerian!
That is the case that $E$ is an Eisenstein series of dimension 3 induced from the modular function $\delta_P^s$ of the Siegel parabolic $P$ of $Sp_4$ (one may also twist by
a character of $GL_1$).
Indeed, after using low rank isogenies to again regard this as an integral of an $SO_6$ automorphic form against an Eisenstein series on 
$SO_5$, this is the integral treated by the authors and Bump \cite{BFG} with $n=2,m=1$ (see (1.4) there).  The integral is zero unless 
the automorphic representation $\pi$ is a lift from $Sp_4$, and in that case it represents a degree 5 $L$-function.  However
the dimension equation does not hold, since the Siegel Eisenstein series is of dimension 3, not 4.

There are other important examples of integrals that represent $L$-functions but that do not satisfy the dimension equation of \cite{Ginzburg-2014}.
One class of such integrals are the doubling integrals, a class first constructed by Piatetski-Shapiro and Rallis \cite{PS-R-doubling}, which represent the standard $L$-function of an automorphic
representation on a classical group $G$, twisted by $GL_1$.  The original doubling integral was for split classical groups but the construction has been extended or used by a number of authors including
Yamana \cite{Yam}, Gan \cite{Gan-doubling}, Lapid and Rallis \cite{L-R}, and Ginzburg and Hundley \cite{G-H}.  
The doubling construction was extended to the tensor product $L$-function for $G\times GL_k$ by Cai, Friedberg, Kaplan and Ginzburg \cite{CFGK}. 
(This extension is particularly helpful as it allows one to use the converse theorem to prove lifting results.)
Another class of integrals representing $L$-functions that does not satisfy the original dimension equation is the ``new way" integrals, a class also initiated by Piatetski-Shapiro and Rallis \cite{PS-R-newway}.
Once again this class has been extended, in particular there are the integrals of Bump, Furusawa and Ginzburg \cite{BFuG} and of Gurevich-Segal \cite{Gu-Se}.
A third type of integral that is outside the scope of the dimension equation is the Godement-Jacquet integral \cite{Godo-Jaq}.
Finally, the WO-model integrals of \cite{BFG} do not satisfy the dimension equation.

These last examples all raise the question: is it possible to modify the dimension equation so that it encompasses these examples?  In fact, the answer is yes.   
We explain this in the next Section below.

\section{Extending the Dimension Equation}\label{extending-dc}
Let us begin with the general form of a Rankin-Selberg integral, following \cite{Ginzburg-2016}.  Let $G_i$, $1\leq i\leq l$ be reductive
groups defined over $F$, $\pi_i$ be irreducible automorphic representations of $G_i(\A)$, and let $\varphi_i\in \pi_i$ be automorphic forms.  
Suppose that at least one automorphic form, say $\varphi_1\in \pi_1$, is cuspidal so that the integral to be considered converges.
Let
$U_i\subset G_i$ be subgroups attached to {\it some} unipotent orbits of $G_i$, and let $\psi_i$ be characters of $[U_i]$.
Here the groups $U_i$ may be trivial, and the characters $\psi_i$ need not be in `general position' for $U_i$; 
in particular we do not assume that $U_i$ is attached to the unipotent orbit of $\pi_i$.
We suppose that there is a reductive group $G$ such that for each $i$, the stabilizer of $\psi_i$ inside a suitable Levi subgroup of $G_i$ contains
$G$ (up to isomorphism).  In this case if $\varphi_i$ is an automorphic form on $G_i$ then the Fourier coefficient $\varphi_i^{U_i,\psi_i}$
is automorphic as a function of $G(\A)$.
 Then we consider the integral
\begin{equation}\label{generalRS-DG}\int_{[Z\backslash G]} \varphi_1^{U_1,\psi_1}(g)\varphi_2^{U_2,\psi_2}(g)\dots \varphi_l^{U_l,\psi_l}(g)\,dg,\end{equation}
where $Z$ is the center of $G$ and the central characters are chosen compatibly so that the integrand is $Z(\A)$ invariant.

We are concerned with the case that one automorphic form, say $\varphi_l$, is an Eisenstein series induced from a Levi subgroup $P=MN$ and an automorphic 
representation $\tau$ of $M(\A)$. Write $\varphi_l$ as $\varphi_l(g,s)$ and write the associated
section $f_\tau(g,s)$, so for $\Re(s)\gg0$ we have
$$\varphi_l(g,s)=\sum_{\gamma\in P(F)\backslash G(F)}f_\tau(\gamma g,s).$$
Then the integral \eqref{generalRS-DG} is a function of a complex variable $s$, defined for $\Re(s)\gg0$ and with analytic continuation (possibly with poles) and functional equation by
virtue of the corresponding properties for the Eisenstein series $\varphi_l(g,s)$.

Such integrals may represent $L$-functions when they can be shown to be equal to adelic integrals of some factorizable coefficients of the functions in the integrand.  To write this generally,
suppose that for each $i$, $1\leq i\leq l-1$, the automorphic representation $\pi_i$ attached to $\varphi_i$
has the property that $\mathcal{O}(\pi_i)$ consists of a single unipotent orbit $\mathcal{O}_i$ with unipotent group $V_i$, and that $\varphi_i$ has nonzero Fourier coefficients with
respect to the generic character $\psi_{V_i}$ of $[V_i]$. Write the associated Fourier coefficient
\begin{equation}\label{FC1}L_i(\varphi_i,g)=\int_{[V_i]}\varphi_i(vg)\psi_{V_i}(v)\,dv,\qquad g\in G_i(\A).\end{equation}

Since the first step in analyzing an integral of the form \eqref{generalRS-DG} is to unfold the 
Eisenstein series $\varphi_l(g,s)$, let $V_l=V_\tau\,N$ where $V_\tau$ is the unipotent subgroup of $M\subseteq G_l$ 
corresponding to the maximal
unipotent orbit $\mathcal{O}_\tau$ attached to $\tau$ (again assumed unique), and let $\psi_{V_l}$ be a corresponding character of the unipotent subgroup $V_\tau$ 
extended trivially on $N$. Then we write
\begin{equation}\label{FC2}L_l(f_\tau,g,s)=\int_{[V_l]} f_\tau(vg,s)\psi_{V_l}(v)\,dv,\qquad g\in G_l(\A).\end{equation}

To describe an unfolding of the integral \eqref{generalRS-DG}, for $1\leq i\leq l$, let $R_i$ be a {\sl unipotent} subgroup of $G_i$, and $\psi_{R_i}$ be a character of $R_i(\A)$.
For $1\leq i\leq l-1$ write
$$\varphi_i^{R_i}(g)=\int_{R_i(\A)} L_i(\varphi_i,rg)\psi_{R_i}(r)\,dr,\qquad g\in G_i(\A)$$
and similarly let
$$f_\tau^{R_l}(g,s)=\int_{R_l(\A)} L_l(f_\tau,rg,s)\psi_{R_l}(r)\,dr,\qquad g\in G_l(\A).$$

Then we suppose that for $\Re(s)\gg0$ the integral \eqref{generalRS-DG} unfolds to
\begin{equation}\label{generalRS-unfolded}\int_{Z(\A)M(\A)\backslash G(\A)} \varphi_1^{R_1}(g)\varphi_2^{R_2}(g)\dots \varphi_{l-1}^{R_{l-1}}(g)
f_\tau^{R_l}(w_0g,s)\,dg,\end{equation}
where $M$ is a subgroup of $G$ and $w_0$ is a Weyl group element.
Such an integral is called a {\sl unipotent global integral}.
When the functionals 
$$\mathcal{L}_i(\varphi_i)=\varphi_i^{R_i}(e), 1\leq i\leq l-1;\qquad \mathcal{L}_{l,s}(f_\tau)=f_\tau^{R_l}(e,s)$$
are each factorizable, then the unfolded integral is Eulerian. Such an integral is called a {\sl Eulerian
unipotent integral} in \cite{Ginzburg-2016}.  In fact, this broad class of integrals includes all the Rankin-Selberg integrals presented above 
which satisfy the original form of the dimension equation.  More generally,
the dimension equation
is expected to hold for all Eulerian unipotent integrals, and  a classification of this class of integrals is initiated in \cite{Ginzburg-2016}.

Our goal now is to extend the dimension equation to include many of the examples noted at the last section, that is, Rankin-Selberg integrals
that do not satisfy the dimension equation.  To do so, we begin
with the same integral \eqref{generalRS-DG} but we extend the notation so that each $\varphi_i$ is now either
a single automorphic form in $\pi_i$ or a pair of automorphic forms, one in $\pi_i$ and the other in its contragredient $\widetilde{\pi}_i$. We also
relax the description of the unfolding above in two ways.
We suppose once again that the
integral unfolds to an Eulerian expression \eqref{generalRS-unfolded}.  However, in this expression we no longer assume that $L_i$ is a Fourier coefficient given by an integral 
of the form \eqref{FC1} or \eqref{FC2} over a 
{\sl unipotent} subgroup $V_i$
against a character of the maximal unipotent orbit attached to $\pi_i$ or $\tau$.  Instead we allow the integrals in \eqref{FC1}, \eqref{FC2} to be over arbitrary subgroups of $G_i$.
For example, $L_i$ might be an integral realizing a unique functional such as the Shalika functional
or the spherical functional.  In this case, we do not use $\dim{\pi_i}$ in the dimension equation.  Instead, we replace this term by the dimension
of the full group that realizes the unique functional.  

To be specific, for $1\leq i\leq l-1$ suppose that there is an algebraic group $X_i\subset G_i$, not necessarily unipotent, such that
$$L_i(\varphi_i)(g)=\int_{[X_i]}\varphi_i(xg)\,\psi_{X_i}(x)\,dx,$$
where $\psi_{X_i}$ is a character of $[X_i]$, and let $\mathcal{L}_i$ be the associated functional on $\pi_i$.
In this case we define the dimension of $\mathcal{L}_i$ to be the dimension of the algebraic group $X_i$. For example, if $\varphi_i=(\phi,\phi')$ is a pair of
automorphic functions with $\phi\in \pi_i$, $,\phi'\in\widetilde{\pi}_i$, we may consider $\mathcal{L}_i$ to be the functional that assigns to $\varphi_i$ the global matrix coefficient
\begin{equation}\label{do2}
\mathcal{L}_i(\varphi_i)=\int_{[G_i]}
\phi(g) \phi'(g)\,dg.
\end{equation}
In this case we define $\dim(\mathcal{L}_i)=\dim G_i$.  Similarly, for the Eisenstein series induced from $P$,  we consider 
$$L_l(f_\tau,g,s)=\int_{[X_l]} f_\tau(xg,s)\psi_{X_l}(v)\,dv,\qquad g\in G_l(\A),$$
and we define
$\dim(\mathcal{L}_l)=\dim X_l+\dim N$.
Note that we include the dimension of $N$ in the dimension of the 
functional $\mathcal{L}_l$.  This gives, by definition, an extension of \eqref{dim-of-ES}.

This definition of the dimension of a functional requires a coda.  
To explain why, suppose that $E(g,s)$ is the mirabolic Eisenstein series on $GL_3(\A)$.  This function has unipotent orbit $(21)$ (in fact, it generates the 
automorphic minimal representation for $GL_3(\A)$), so it has dimension 2.  However, if $e_{i,j}$ denotes the matrix with $1$ in position $(i,j)$ and $0$ elsewhere, then it is easy to prove that 
$$\int_{(F\backslash\A)^2} E(I_3+re_{1,2}+me_{1,3},s)\,\psi(r)\,dr\,dm=\int_{(F\backslash\A)^3} E(I_3+re_{1,2}+me_{1,3}+ne_{2,3},s)\,\psi(r)\,dr\,dm\,dn$$
and in fact both corresponding functionals are nonzero and unique.  More generally, when an orbit $\mathcal{O}$ is small the integral defining the Fourier coefficient with respect to that orbit
will have additional invariance properties (there is a nontrivial group that normalizes $U_{\mathcal{O}}$ and stabilizes $\psi_{\mathcal{O}}$) and so can be enlarged in a similar way. Hence to define the dimension of a functional $\mathcal{L}$ 
we must specify that if a functional over a smaller group also realizes $\mathcal{L}$ 
then we use that smaller group in defining the dimension of  $\mathcal{L}$.

We broaden the dimension equation to the following {\sl extended dimension equation}.
\begin{definition}  The Extended Dimension Equation is the equality
\begin{equation}\label{modifed-eqn} \dim(G)+\sum_{i=1}^l \dim(U_i)= \sum_{i=1}^l \dim(\mathcal{L}_i).\end{equation}
\end{definition}
That is, the sum of the dimensions of the groups in the Rankin-Selberg integral \eqref{generalRS-DG} is equal to the sum of dimensions of the {\it functionals} that are obtained
after unfolding.  The key point is that we are using the dimensions of functionals in place of the Gelfand-Kirillov dimensions of the representations $\pi_i$.

We conclude this section by comparing the extended dimension equation with the original form of the dimension equation, 
where $\dim(\mathcal{L}_i)$ is replaced by $\dim(\pi_i)$, a quantity which is computed by \eqref{compute-FJ}.  Suppose first that 
$\pi$ is an automorphic representation such that $\mathcal{O}(\pi)$
is a single unipotent orbit $\mathcal{O}$.  Recall that there is an associated filtration $N_{i,\mathcal{O}}$ of $N$ given by \eqref{filtration}.
Let $\varphi_\pi$ be an automorphic form in the space of $\pi$. 
If $N_{1,\mathcal{O}}=N_{2,\mathcal{O}}$, and if an integral involving $\varphi_\pi$
unfolds to the Fourier coefficient $\varphi_\pi^{U_\mathcal{O},\psi_{\mathcal{O}}}$ i.e.\ to the functional \eqref{FC1} given by integration over $[N_{2,\mathcal{O}}]$, then since
$\dim(\pi)=\dim(N_{2,\mathcal{O}})$ (by \eqref{compute-FJ}), in this situation $\dim{\mathcal{L}}=\dim\pi$. If instead $N_{1,\mathcal{O}}\neq N_{2,\mathcal{O}}$, then 
$N_{1,\mathcal{O}}/ N_{2,\mathcal{O}}$ has the structure of a Heisenberg group.  In this situation, it is often the case that 
an integral involving $\varphi_\pi$ unfolds to a Fourier-Jacobi coefficient of the form
\begin{equation}\label{func1}
{L}(\varphi_\pi)(h):=
\int\limits_{[N_{1,\mathcal{O}}]}\theta_{Sp}(l(v)h)
\varphi_\pi(vh)\,\psi_{N_1}(v)\,dv,
\end{equation}
where $\theta_{Sp}$ is a certain theta function obtained via the Weil representation.  (Here we might need to involve covering groups.)
See for example \cite{GRS-book}, Section~3.2; the notation is given in detail there.  If $\mathcal{L}$ is the functional obtained by composing $L$ with evaluation
at the identity, then in this case we would have 
$\dim{\mathcal L}=\dim{N_{1,\mathcal{O}}}.$
However, if  $\Theta_{Sp}$ is the representation corresponding to $\theta_{Sp}$, then
it is known that $\dim\Theta_{Sp}=\tfrac{1}{2}\dim(N_{1,\mathcal{O}}/ N_{2,\mathcal{O}})$.
Since $\dim\pi=\dim N_{2,\mathcal{O}}+\tfrac12 \dim(N_{1,\mathcal{O}}/ N_{2,\mathcal{O}})$ (see \eqref{compute-FJ}), we obtain
the equality 
$\dim\Theta_{Sp}+\dim\pi=\dim{N_{1,\mathcal{O}}}.$
Thus the definition of $\dim{\mathcal{L}}$
is in fact consistent with original dimension equation in this case. We conclude that when Fourier-Jacobi coefficients are used to construct Eulerian integrals (see for example \cite{GJRS}),
the different versions of the dimension equation we have presented are consistent, and it is accurate to label  \eqref{modifed-eqn} an extension of the original dimension equation.  
Last,  for the Eisenstein series, if the integral in the inducing data $\tau$ unfolds to
the Whittaker functional then the definition of $\dim(\mathcal{L}_l)$ is consistent with \eqref{dim-of-ES} and with the original Dimension Equation.

\section{Examples of the Extended Dimension Equation}\label{examples-of-extend}

In this section we offer examples of the extended dimension equation, and explain how other classes of integrals representing $L$-functions fit into the picture.
To begin, {\it both} integrals of the form \eqref{doubleex} satisfy this extended dimension equation.  Indeed, if $E$ is the Siegel Eisenstein series whose unipotent orbit
has dimension 3, then this integral unfolds to a WO model for $\pi$
in the sense of \cite{BFG}.  This model involves an integration over a 3-dimensional reductive group (a form of $SO_3$) as well as a 4-dimensional unipotent group so in this case the dimension of the functional 
applied to $\pi$ is 7.  
The modified dimension equation does indeed hold, in the form $10=7+3$ (in contrast to the integral involving the Klingen Eisenstein series, where the contributions from
the two functions in the integrand were $6$ and $4$).

Next we discuss doubling integrals.  This is a class of integrals introduced by  Piatetski-Shapiro and Rallis \cite{PS-R-doubling}, of the form
\begin{equation}\label{do1}
\int\limits_{G(F)\times G(F)\backslash G({\A})\times G({\A})}
\varphi_\pi(g)\varphi_\sigma(h)E(\iota(g,h),s)\,dg\,dh
\end{equation}
where $G$ is a symplectic or orthogonal group, $\pi$ and $\sigma$ are two  irreducible cuspidal automorphic representations of $G({\A})$,
and $\varphi_\pi$, $\varphi_\sigma$ are in the corresponding spaces of automorphic forms. The Eisenstein series is defined on 
an auxilliary group $H({\A})$
and $\iota:G\times G \to H$ is an injection.
They show that after unfolding the Eisenstein series, the open orbit representative involves the inner product
\begin{equation}\label{do3}
<\varphi_\pi,\varphi_\sigma>=\int\limits_{G(F)\backslash G({\A})}
\varphi_\pi(g)\varphi_\sigma(g)\,dg
\end{equation}
as inner integration. This integral is nonzero unless $\sigma$ is the contragredient  of $\pi$, and in that case, 
the integral involves the functional \eqref{do2}, that is, the matrix coefficient, which is factorizable.  It is readily checked that
the original form of the dimension equation does not hold.

The dimension of the  functional \eqref{do2} is equal to $\dim(G)$.  Thus the extended dimension equation attached to the integral \eqref{do1} is 
$$2\, \dim(G)=\dim(G)+\dim(E),$$ that is, $$\dim(G)=\dim(E).$$
Moreover, this equation is satisfied by the integrals in \cite{PS-R-doubling}.  For example, 
consider  integral \eqref{do1} with $G=Sp_{2n}$. In this case the Eisenstein series $E(\cdot,s)$ is defined on the group $H({\A})$ with $H=Sp_{4n}$.   It is the maximal parabolic Eisenstein
series attached to the parabolic $P$ with Levi factor $GL_{2n}$ obtained by inducing the modular character $\delta_P^s$.   
Thus $\dim(E)=
\dim(U(P))$, where $U(P)$ is the unipotent radical of $P$. This is given by 
$$\dim(U(P))=\frac{1}{2}(1+2+\cdots+
2n)=n(2n+1)=\dim(Sp_{2n}).$$ 
Thus the extended dimension equation indeed holds for the doubling integral \eqref{do1}.   We remark that $E(\cdot, s)$ is attached to 
the unipotent orbit  $(2^{2n})$.

It may similarly be confirmed that the dimension equation holds for the generalized doubling integrals of Cai, Friedberg, Ginzburg and Kaplan \cite{CFGK},
that represent the Rankin-Selberg $L$-function on $G\times GL_k$, where $G$ is a classical group, attached to the tensor product of the standard representations.
These integrals are of the form
\begin{equation}\label{do5}
\int\limits_{G(F)\times G(F)\backslash G({\A})\times G({\A})}
\varphi_\pi(g)\,\varphi_\sigma(h)\,E_\tau^{U,\psi_U}(\iota(g,h),s)\,dg\,dh
\end{equation}
where now $E_\tau$ is an Eisenstein series on a larger group $H$ whose construction depends on $\tau$ and the superscripts on $E$ denote a Fourier coefficient
with respect to a unipotent group $U$ and character $\psi_U$ such that $\iota(G\times G)$ is contained in the stabilizer of $\psi_U$ inside the normalizer of $U$ in $H$.
Suppose that $G=Sp_{2n}$.  Then $H=Sp_{4kn}$, the Eisenstein series is induced from a generalized Speh represntation on $GL_{2kn}$ which has unipotent
orbit $(k^{2n})$, and 
the $(U,\psi_U)$ coefficient is one corresponding to the unipotent orbit $((2k-1)^{2n}1^{2n})$ in $H$.
The integral once again unfolds to an integral involving matrix coefficients.  
Due to the Fourier coefficient with respect to $(U,\psi_U)$, the extended dimension equation in this case becomes
\begin{equation}\label{dim1}
 2\dim(Sp_{2n}) +\dim(U) = \dim(Sp_{2n}) + \dim(E_{\tau}).
\end{equation}
To show that this is true, it follows from \cite{G1} that
\begin{equation*}
\dim(E_{\tau})= \frac{1}{2}\dim((k)^{2n})
+ \dim(U(P)),
\end{equation*}
where $\dim((k)^{2n})$ is the dimension of the unipotent orbit of the inducing data,
and $U(P)$ is the unipotent radical of the maximal parabolic $P$ inside $H$ whose Levi factor is $GL_{2kn}$.  (That is, \eqref{dim-of-ES} holds in this situation.)
The number $\frac{1}{2}\dim((k)^{2n}))$ is equal to
the dimension of the unipotent radical of the parabolic subgroup of $GL_{2nk}$ whose Levi
part is $GL_{2n}^k$, that is, $2n^2k(k-1)$. It is then easy to check that \eqref{dim1} holds.

There are other classes of Eulerian integrals.  The `new way' integrals of \cite{PS-R-newway} unfold to functionals that are not unique.  They satisfy the extended dimension equation,
albeit tautologically.  The integrals of Godement-Jacquet type may be obtained from doubling integrals after unfolding.  Hence they should be regarded as belonging
to this paradigm.  Whether or not this is helpful for efforts to extend the method (the Braverman-Kazhdan-Ng$\hat{\textrm{o}}$ program)  remains to be seen.

\section{Integral Kernels and the Dimension Equation}\label{integral-kernels}

Integral kernels appear often in the theory of automorphic forms as a way to relate automorphic forms on one group to automorphic
forms on a different group.  In this brief Section, we explain the connection between such constructions and the dimension equation and illustrate the use
of this equation to detect properties of such a correspondence.  We follow \cite{Ginzburg-2014} and provide an additional example of interest.
Then in the next Section we turn to a similar analysis using the extended dimension equation.

Suppose that $G,H,L$ are reductive groups and there is an embedding $\iota:G\times H\to L$ such that the images of $G$ and $H$ in $L$ commute.
Let $U$ be a unipotent subgroup of $L$ and $\psi_U$ be a character of $[U]$ whose stabilizer in $L$ contains $\iota(G(\A),H(\A))$.
Let $\Theta$ denote an
automorphic representation of $L(\A)$.   Then one may seek to construct a lifting from automorphic representations of $G(\A)$ to $H(\A)$ as follows.

 Let $\pi$ denote an irreducible cuspidal representation of $G({\A})$. Let $\sigma$ be the representation of $H({\A})$ generated by all functions of the form 
\begin{equation}\label{lift1}
f(h)=\int\limits_{[G]}\ 
\int\limits_{[U]}
\varphi_\pi(g)\,\theta(u(g,h))\,\psi_U(u)\,du\,dg,
\end{equation}
with $\theta$ in the space of $\Theta$ and $\varphi_\pi$ in the space of $\pi$. Notice that from the properties of $\Theta$, 
the functions $f(h)$ are $H(F)$-invariant functions on $H(\A)$.  As a first case, suppose that $\sigma$ is an irreducible automorphic representation of $H(\A)$.
In that case, the dimension equation attached to this construction is 
\begin{equation}\label{dim3} 
\dim(G)+\dim(U)+ \dim(\sigma)=\dim(\pi)
+\dim(\Theta).
\end{equation}
That is, since the integral \eqref{lift1} gives a representation on $H(\A)$ instead of an $L$-function, we include the dimension of the lift, $\sigma$, with the dimensions of the groups.
See \cite{Ginzburg-2014}, Section 6.  More generally, suppose that $\sigma$ is in the discrete part of the space $L^2(H(F) Z_H(\A)\backslash H(\A),\omega)$, where
$Z_H$ is the center of $H$ and $\omega$ is a central character (this is true, for example, when $\sigma$ is cuspidal). In that case we expect that at least one of the summands of $\sigma$
will satisfy equation \eqref{dim3}. 

This simple equation turns out to be quite powerful. We illustrate with an example.  
Suppose that $G=Sp_{2n}$, $H=SO_{2k}$, $L=Sp_{4nk}$, and $\Theta$ is the classical theta representation.
Note that the unipotent orbit attached to $\Theta$ is $(21^{4nk-2})$, and $\dim(\Theta)=2nk$. Suppose also that $\pi$ is generic, so 
$\dim(\pi)=n^2$. Since $\dim(G)=2n^2+n$,
 the dimension equation \eqref{dim3} becomes
$$\dim(\sigma)=2nk-n-n^2.$$
As a first consequence, if $k<\tfrac{n+1}{2}$ then the equation would assert that $\dim(\sigma)<0$.  So we expect that the lift must be zero.
Second, let us ask for which $k$ the lift $\sigma$ can be generic.  In that case, we would have $\dim(\sigma)=k^2-k$.  We conclude that a necessary condition for $\sigma$ to be
generic is the condition
$$k^2-k=2nk-n-n^2.$$
For a fixed $n$, there are two solutions to this equation, namely $k=n+1$ and $k=n$.  And indeed, both these consequences of the dimension equation are true.  
The lift does vanish if $k<\tfrac{n+1}{2}$. And
the lift to $SO_{2k}$ with $k=n+1$ is always generic while the lift with $k=n$ is sometimes generic, and these are the only cases where the lift of a generic cuspidal automorphic 
representation is generic.   See \cite{GRS-theta}, Cor.\ 2.3 and the last two paragraphs in Section 2;
for the analogous local result see Proposition 2.4 there.

\section{Doubling Integrals and Integral Kernels}\label{general}

In this Section we connect doubling integrals and integral kernels.  First let us describe such doubling integrals in general.
Suppose that $H$ is a group,  $U\subseteq H$ is a unipotent subgroup, and $\psi_U$ is a character of $[U]$. Suppose that there is an embedding $\iota:G\times G\to N_H(U)$ whose
image fixes $\psi_U$ (under conjugation).  Let $\pi$, $\sigma$ be automorphic representations of a group $G(\A)$.
Then 
we consider integrals of the form
\begin{equation}\label{do33}
\int\limits_{[G\times G]}\ 
\int\limits_{[U]}
\varphi_\pi(g)\,\varphi_\sigma(h)\,E(u\iota(g,h),s,f_s)\psi_U(u)\,du\,dg\,dh.
\end{equation}
Note that this includes the integrals \eqref{do1} considered by Piatetski-Shapiro and Rallis but that 
we allow an extra unipotent integration, as in \eqref{do5}.
We  say that the integral \eqref{do33} is a doubling integral if for $\Re(s)$ large it is equal to
\begin{equation}\label{do331}
\int\limits_{  G({\A})}\ 
\int\limits_{ U_0({\A})}
<\varphi_\pi,\sigma(g)\varphi_\sigma>
f_W(\delta u_0(g,1),s)\psi_U(u_0)\,du_0\,dg,
\end{equation}
for some subgroup $U_0$ of $U$,  some Fourier coefficient $f_W$ of the section $f_s$, and some
$\delta\in H(F)$. Due to the matrix coefficient, this integral is zero unless $\sigma$ is the contragredient of $\pi$, so we 
suppose this from now on.

The extended dimension equation attached to the integral \eqref{do33} is
\begin{equation}\label{dim111} 
\dim(G)+\dim(U)=\dim(E).
\end{equation}
We emphasize that if a certain integral satisfies the extended dimension equation \eqref{dim111} it does not necessarily means that the integral will be non-zero or Eulerian. This can only be determined after the unfolding process. The advantage of the equation \eqref{dim111} is that it eliminates unlikely candidates. 

Suppose that \eqref{do33} is a doubling integral. Then we may make an integral kernel as follows. 
Fix a cuspidal automorphic representation $\pi$ of $G(\A)$, and consider the functions
\begin{equation}\label{lift2}
f(h)=\int\limits_{[G]}
\int\limits_{[U]}
\varphi_\pi(g)E(u(g,h),s)\psi_U(u)\,du\,dg.
\end{equation}
Arguing as in Theorem 1 of Ginzburg and Soudry \cite{G-S-2018} we deduce that the representation $\sigma$
generated by these functions is a certain twist of the representation $\pi$. 
In particular we have $\dim(\sigma)=\dim(\pi)$. Hence the dimension equation \eqref{dim3} is the same as the extended dimension equation  \eqref{dim111}.
Accortdingly, one can view the construction given by \eqref{lift1} as a generalization of the construction of doubling integrals. 

This discussion leads to the following Classification Problem, which illustrates the kind of question
that these constructions raise. Let $\pi$ denote a cuspidal representation of $G({\A})$, and $\sigma$ denote a cuspidal representation of $H({\A})$. 
Find examples of representations $\Theta$ defined on a group $L({\A})$ as above which satisfy the following two conditions:

\begin{enumerate}
\item The dimension equation \eqref{dim3} holds.
\item Suppose that the integral  
\begin{equation}\label{lift3}
\int\limits_{[G\times H]}
\int\limits_{[U]}
\varphi_\sigma(h)\varphi_\pi(g)\theta(u(g,h))\,\psi_U(u)\,du\,dg\,dh
\end{equation}
is not zero for some choice of data. Then the representation $\pi$ determines the representation $\sigma$ uniquely.
\end{enumerate}
This classification problem has many solutions as stated.  We present an example of how it may be approached in the next Section.

We remark that a similar analysis based on the dimension equation may be applied to the descent integrals of Ginzburg, Rallis and Soudry \cite{GRS-book}.

\section{An Example}\label{example}
In this Section we will show how the above considerations can help find possible global integrals of the form given by equation 
\eqref{lift3}.  To do so, in this Section we will work with the case $G=H=Sp_{2m}$. 

The first step is to consider Fourier coefficients whose stabilizer contains the group $Sp_{2m}\times Sp_{2m}$. 
From the theory of nilpotent orbits, the partition 
$((2k-1)^{2m}(2r-1)^{2m})$ has this property (see \cite{C-M}). 
This leads us to look for a representation $\Theta$ defined on the group 
$Sp_{4m(k+r-1)}({\A})$ which is $Sp_{4m(k+r-1)}(F)$ invariant and which satisfies the dimension equation given in \eqref{dim3}.
Thus, we are seeking representations $\Theta$ such that $\mathcal{O}(\Theta)=\mathcal{O}$ with 
\begin{equation}\label{dim6}
\dim(Sp_{2m})+\tfrac{1}{2}\dim((2k-1)^{2m}(2r-1)^{2m})=
\tfrac{1}{2}\dim({\mathcal O}).
\end{equation} 
Here $\mathcal{O}$ corresponds to a partition of the number $4m(k+r-1)$.  There are many solutions.
For example, if we begin with the orbit $(5^23^2)$, that is $m=1$, $k=3$ and $r=2$, then the orbits $\mathcal{O}$ equal to
$(65^2)$, $(83^22)$, $(6^22^2),$ and $(8421^2)$ all satisfy condition \eqref{dim6}.

Since this low rank case already offers so many possibilities, 
this suggests that it is not expeditious 
to classify all orbits $\mathcal{O}$ which satisfy \eqref{dim6}. 
Experience suggests that a good place to begin a further analysis is to focus on orbits of the form $(n_1^{2l_1}n_2^{2l_2}\ldots
n_p^{2l_p})$ such that $p$ is minimal. For example, in the case above, if we begin with the orbit $(5^23^2)$, 
we would seek $\Theta$ such that $\mathcal{O}(\Theta)=(6^22^2)$. 
In general we have
\begin{lemma}\label{lem1}
We have 
\begin{equation}\label{dim7}\notag
\dim(Sp_{2m})+\tfrac{1}{2}\dim((2k-1)^{2m}(2r-1)^{2m})=
\tfrac{1}{2}\dim((2k)^{2m}(2r-2)^{2m}).
\end{equation}
\end{lemma}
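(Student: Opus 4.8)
The plan is to verify the stated identity by directly applying the dimension formula \eqref{dim2} to each of the three symplectic partitions appearing in the statement, and then comparing the resulting polynomials in $m$, $k$, and $r$. Recall that for a symplectic orbit $\mathcal{O}$ attached to a partition $(n_1 n_2 \ldots n_s)$ of $2n$, formula \eqref{dim2} gives $\dim\mathcal{O} = 2n^2 + n - \tfrac12\sum_{i=1}^s (2i-1)n_i - \tfrac12 a$, where $a$ is the number of odd parts. Since all three partitions in the lemma have each part repeated with multiplicity $2m$, they are genuine symplectic partitions, so the formula applies to all of them.

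First I would record the ambient sizes: the partition $((2k-1)^{2m}(2r-1)^{2m})$ is a partition of $2n'$ with $n' = m(2k-1)+m(2r-1) = 2m(k+r-1)$, while $((2k)^{2m}(2r-2)^{2m})$ is a partition of $2n''$ with $n'' = m(2k)+m(2r-2) = 2m(k+r-1)$; reassuringly $n'=n''=2m(k+r-1)$, consistent with both orbits living in $Sp_{4m(k+r-1)}$. Next I would compute the weighted sum $\sum_i(2i-1)n_i$ for each partition. The key technical point is that the parts must be listed in \emph{non-increasing} order before applying the weights $1,3,5,\dots$; I would assume WLOG $k\ge r$ so that $2k-1 > 2r-1$ and $2k > 2r-2$, placing the larger block first in each case. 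The odd-part counts are also needed: $((2k-1)^{2m}(2r-1)^{2m})$ has $a=4m$ odd parts, $((2k)^{2m}(2r-2)^{2m})$ has $a=0$ odd parts, and $\dim(Sp_{2m}) = 2m^2+m = \tfrac12\dim\mathcal{O}_{\max}$ on $Sp_{2m}$ can be taken directly.

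The main step is then the weighted-sum computation. For a partition with a first block of size $p$ repeated $2m$ times followed by a second block of size $q$ repeated $2m$ times, the weights applied are $1,3,\dots,(4m-1)$ on the first block and $(4m+1),\dots,(8m-1)$ on the second, so $\sum_i(2i-1)n_i = p\sum_{i=1}^{2m}(2i-1) + q\sum_{i=2m+1}^{4m}(2i-1) = p\cdot(2m)^2 + q\,\bigl((4m)^2-(2m)^2\bigr) = 4m^2 p + 12 m^2 q$. I would apply this once with $(p,q)=(2k-1,2r-1)$ and once with $(p,q)=(2k,2r-2)$, substitute into \eqref{dim2} together with the odd-part corrections, and divide the whole orbit-dimension expressions by $2$. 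The expected obstacle is purely bookkeeping: correctly splitting the weight sum across the two equal-multiplicity blocks and keeping track of the $-\tfrac12 a$ term, which contributes $-2m$ to the halved dimension of the odd partition and $0$ to the even one. Once both halved dimensions are written as explicit polynomials in $m,k,r$, I would add $\dim(Sp_{2m})=2m^2+m$ to the odd-partition expression and confirm term-by-term that the result equals the even-partition expression, completing the proof.
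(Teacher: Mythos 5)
Your proposal is correct and follows essentially the same route as the paper, which simply says to apply the dimension formula \eqref{dim2} to both partitions and check that the halved difference equals $2m^2+m=\dim(Sp_{2m})$, omitting the details that you supply (and your weighted-sum bookkeeping does yield $\tfrac12\bigl(4m^2+2m\bigr)=2m^2+m$). The only caveat is that ``WLOG $k\ge r$'' is not truly without loss of generality since the right-hand partition is not symmetric in $k$ and $r$; rather, $k\ge r$ is the standing convention implicit in writing the partition with parts in non-increasing order, and it holds in all the applications in the paper.
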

\begin{proof}
Using equation \eqref{dim2}, one may compute the difference
$$\tfrac{1}{2}\dim((2k)^{2m}(2r-2)^{2m})-
\tfrac{1}{2}\dim((2k-1)^{2m}(2r-1)^{2m})$$
and to show that it is equal to $2m^2+m=\dim(Sp_{2m})$.
We omit the details. 
\end{proof}

We observe that the doubling construction of the authors, Cai and Kaplan \cite{CFGK} fits this rubric, indeed, it provides an example
of such an integral with $r=1$.
More precisely, the representation $\Theta$ used in \cite{CFGK} is an Eisenstein series 
$E(\cdot,s)$  defined on $Sp_{4mk}({\A})$, and it satisfies ${\mathcal O}(E(\cdot,s))=((2k)^{2m})$. 
Then a Fourier coefficient of $E(\cdot,s)$ is taken with respect to a unipotent group $U$ and generic character corresponding to the partition
$((2k-1)^{2m}1^{2m})$ of $4km$.

Returning to the general case, the next step is to find an automorphic representation $\Theta$ defined on the group $Sp_{4m(k+r-1)}({\A})$ 
which satisfies ${\mathcal O}(\Theta)=((2k)^{2m}(2r-2)^{2m})$. The main source of examples for such representations are Eisenstein series and their residues. 
If we have a candidate for $\Theta$ which is an Eisenstein series, then  by unfolding the Eisenstein series it is often
possible to check if the non-vanishing of 
equation \eqref{lift3} implies that $\pi$ determines 
$\sigma$ uniquely. See \cite{CFGK} for an example. However, if the representation $\Theta$ is obtained as a {\sl residue} of an Eisenstein series, an unfolding process is 
not readily available to us unless we attempt to unfold first and then take the residue, a strategy that is often problematic, and in this case, typically only a weaker statement can be checked. 

In the rest of this Section we will describe a simple case where the representation $\Theta$ is not an Eisenstein series.  We will only give the flavor 
of the construction here; we plan to present 
the details in a
separate paper.

Let $\tau$ denote an irreducible cuspidal representation of the group $GL_n({\A})$.  Suppose that $n>2$ is even and that the partial $L$-function 
$L^S(\tau,\wedge^2,s)$ has a simple pole at $s=1$. Let ${\mathcal E}_\tau$ denote the generalized Speh representation defined on the group 
$GL_{3n}({\A})$. See \cite{CFGK}.  Let $P(GL_{3n})$ be the maximal parabolic subgroup of $Sp_{6n}$ whose Levi part is $GL_{3n}$ (the so-called
Siegel parabolic).
Let $E_\tau(\cdot,s)$ denote the Eisenstein series defined on the group $Sp_{6n}({\A})$ attached to the induced space $Ind_{P(GL_{3n})({\A})}^{Sp_{6n}({\A})}
{\mathcal E}_\tau\delta_P^s$. The poles of this Eisenstein series are determined by the poles of $L^S(\tau,\wedge^2,s)$ and $L^S(\tau,\vee^2,s)$. See \cite{J-L-Z}. It follows from that reference that if $L^S(\tau,\wedge^2,s)$ has a simple pole at $s=1$, then the Eisenstein series $E_\tau(\cdot,s)$ has a simple pole at $s_0=(3n+2)/(6n+2)$. Denote $\Theta'_\tau=\text{Res}_{s=s_0}
E_\tau(\cdot,s)$.  That is, $\Theta'_\tau$ is the automorphic representation spanned by the residues of this family of Eisenstein series at the point $s_0$.
Then, it follows from \cite{G-S} that  there is an irreducible constituent of $\Theta_\tau$ of $\Theta'_\tau$ such that
\begin{equation}\label{conj10}
{\mathcal O}(\Theta_\tau)=((2n)^2n^2).
\end{equation}
Choosing $k=n$, $m=1$ and $r=(n+2)/2$ in Lemma \ref{lem1} we conclude that the dimension equation \eqref{dim3} is satisfied,
and we may form the global integral \eqref{lift3}.

More generally, suppose that the representation $\Theta$ in \eqref{lift3} satisfies ${\mathcal O}(\Theta)=((2n)^2n^2)$.  
Suppose moreover that $\Theta=\otimes'_\nu \Theta_\nu$  (restricted tensor product) where for almost all $\nu$ the unipotent orbit attached to $(\Theta)_\nu$ is also $((2n)^2n^2)$.
This is true if we take $\Theta$ to
be $\Theta_\tau$. Indeed, in this case the corresponding local statement may be proved by arguing
analogously to the global case, i.e.\ replacing Fourier expansions by the geometric lemma, global root
exchange by local root exchange, etc. 

Suppose that $\pi$ is an irreducible cuspidal automorphic representation of the group $SL_2(\A)$ whose image under the `lift' corresponding to $\Theta$
contains an irreducible cuspidal representation $\sigma$, also on the group $SL_2({\A})$.
We seek to establish the relation between $\pi$ and $\sigma$ assuming that integral \eqref{lift3} is not zero for some choice of data. 
Since the representation $\Theta$ is not an Eisenstein series we cannot simply carry out an unfolding process. However, in this case we can show that the representations $\pi$ and $\sigma$ are nearly equivalent. 
We sketch a local argument for representations in general position, omitting the details.

Suppose that $\pi=\otimes'_\nu\pi_\nu$,  $\sigma=\otimes'_\nu\sigma_\nu$. At unramified places suppose that 
$\pi_\nu=\Ind_B^{SL_2}\chi_\nu\delta_B^{1/2}$ and that $\sigma_\nu=\Ind_B^{SL_2}\mu_\nu\delta_B^{1/2}$,
where $B$ is the standard Borel subgroup of $SL_2$ and $\chi_\nu$ and $\mu_\nu$ are unramified characters. Suppose that 
$\chi_\nu$ is in general position. If the integral \eqref{lift3} is not zero for some choice of data, then the space
\begin{equation}\label{hom1.0}
\Hom_{SL_2\times SL_2}(\Ind_B^{SL_2}\chi_\nu\delta_B^{1/2}\times 
\Ind_B^{SL_2}\mu_\nu\delta_B^{1/2},J_{U,\psi_U}(\Theta_\nu))
\end{equation}
is not zero. Here $J_{U,\psi_U}$ is the local twisted Jacquet module 
which corresponds to the Fourier coefficient over the group $U$ and the character $\psi_U$ 
of integral \eqref{lift3}. It follows from Frobenius reciprocity that the space \eqref{hom1.0} is equal to
\begin{equation}\label{hom2}
\Hom_{GL_1\times GL_1}(\chi_\nu\delta_B^{1/2} 
\mu_\nu\delta_B^{1/2},J_{U_1,\psi_U}(\Theta_\nu)).
\end{equation}
Here $U_1$ is a certain unipotent subgroup which contains $U$ and the character $\psi_U$ is the trivial extension from $U$ to $U_1$. 
Then performing root exchanges and using that the unipotent orbit attached to $(\Theta)_\nu$ is $((2n)^2n^2)$, 
one can prove 
that the nonvanishing of the space \eqref{hom2} implies that the space
\begin{equation}\label{hom3}
\Hom_{GL_1}(\chi_\nu\delta_B^{1/2} 
\mu_\nu\delta_B^{1/2},J_{U_2,\psi_{U_2}}(\Theta_\nu))
\end{equation}
is also nonzero. Here $GL_1$ is embedded in 
$GL_1\times GL_1$ diagonally, and $J_{U_2,\psi_{U_2}}$ is the twisted Jacquet module attached to a certain unipotent group
$U_2$ and character $\psi_{U_2}$. This last space may then be analyzed by using properties of the representation $\Theta_\nu$,
and to deduce that $\mu_\nu=\chi_\nu^{\pm 1}$.
This example illustrates how the dimension equation may be used to suggest new integral kernels.

\section{Local Analogues} \label{local-section}

The unfolding of a Rankin-Selberg integral typically has a local analogue, so 
when the dimension equation or extended dimension equation is satisfied, it is natural to seek local statements that hold.  Once again we view the equation as necessary but not sufficient.
In this Section we illustrate the local statements that arise.  Let $K$ be a non-archimedean local field whose residue field has cardinality $q$.

A first example is given by the Rankin-Selberg integrals of Jacquet, Piatetski-Shapiro and Shalika \cite{J-PS-Sh}.
Let $\pi_1$, $\pi_2$ be irreducible admissible generic representations of $GL_n(K)$, $GL_k(K)$, resp.
We keep the notation of Section~\ref{dim-eq-examples} above.
Consider $\pi_2$ to be a module for $Y_{n,k}(K)$ via $\psi$.
If $k<n$, then (\cite{J-PS-Sh}, paragraph (2.11), Proposition) the space of $(GL_k\ltimes Y_{n,k})(K)$-equivariant bilinear forms
$$\text{Bil}_{(GL_k\ltimes Y_{n,k})(K)}(\pi\otimes |\det(\cdot)|^s,\pi')$$
has dimension at most 1, except for finitely many values of $q^{-s}$.  
Similarly, if $n=k$, 
the space
$$\text{Bil}_{GL_n(K)}(\pi\otimes\pi'\otimes |\det(\cdot)|^s,\Ind_{P(K)}^{GL_n(K)}\delta_{P}^{-1/2})$$
has dimension at most 1, except for finitely many values of $q^{-s}$
(\cite{J-PS-Sh}, paragraph (2.10), Eqn.\ (5); see also the Proposition in (2.10) for an equivalent formulation in terms of trilinear forms).
We caution the reader that in the literature this is presented using $GL_n(K)$-equivariance rather than $PGL_n(K)$-equivariance, but unless the central characters are chosen compatibly the
space is zero.  We need to use $PGL_n$ to satisfy the dimension equation. This is comparable to insisting that we choose the group of smallest possible dimension in assigning a dimension to a functional.

As an additional example, in the situation of the generalized doubling integrals of Cai, Kaplan and the authors \cite{CFGK}, the study of the global integral \eqref{do5} leads to the local Hom space
$$\text{Hom}_{G(K)\times G(K)}(J_{U,\psi_U^{-1}}(\Ind_{P(K)}^{H(K)}(W_c(\tau)\delta_P^s),\pi^\vee \otimes \pi)$$
where $J_{U,\psi_U^{-1}}$ denotes a twisted Jacquet module with respect to the group $U(K)$ and character $\psi_U^{-1}$ and the remaining notation is given in \cite{CFGK}, see especially (3.2) there.
Once again, this space is at most one dimensional except for finitely many values of $q^{-s}$
(see the proof of \cite{CFGK}, Theorem 21).
As explained above, a dimension equation holds
but only if we use the extended dimension equation and treat $\pi^\vee \otimes \pi$ as having dimension equal to $\dim(G)$.

Finally, when the dimension equation appears in the context of a lifting result, then one may hope to prove the existence of a local correspondence similar to the Howe correspondence for the
classical theta representation.  The local concerns that arise are illustrated by the treatment of \eqref{hom1.0} above.  In particular, it is natural to seek to extend
such a correspondence beyond a matching of the unramified principal series.

\end{document}